\newtheorem{theorem}{Theorem}[section]
\newtheorem{lemma}[theorem]{Lemma}
\newtheorem{conjecture}[theorem]{Conjecture}
\numberwithin{equation}{section}
\begin{document}

\title{On the Prevalence of Bridge Graphs Among Non-3-Connected Cubic Non-Hamiltonian Graphs}

\author{Rishi Advani}
\email{ra534@cornell.edu}

\date{August 27, 2019}

\keywords{Hamiltonian graph, cubic bridge graph}

\begin{abstract}
There is empirical evidence supporting the claim that almost all cubic non-Hamiltonian graphs are bridge graphs. In this paper, we pose a related conjecture and prove that the original claim holds for non-3-connected graphs if the conjecture is true.
\end{abstract}

\maketitle

\section{Introduction}

Every mention of a graph from here on is referring to a connected undirected graph with no self loops and at most one edge between any pair of vertices.

In 2010, Filar, Haythorpe, and Nguyen conjectured that almost all cubic non-Hamiltonian graphs are bridge graphs~\cite{A}.
In their paper, Filar et al.\ note that all cubic bridge graphs are non-Hamiltonian. It is straightforward to check if a graph is bridge in polynomial time. If the conjecture is true, if we checked a graph for a bridge and found none, we could be confident that the graph was Hamiltonian. In this paper, we will analyze a subset of the graphs considered in the original conjecture, and provide further evidence for it.

\section{Definitions}

A cubic graph is one where every vertex is connected to exactly 3 other vertices. A cycle is a sequence of distinct vertices $v_1, v_2, v_3 \dots v_k$, such that $v_k$ is connected to $v_1$ and for all $i$, $1 \leq i \leq k-1$, $i \in \mathbf{N}$, $v_i$ is connected to $v_{i+1}$. A Hamiltonian cycle is a cycle that contains all the vertices in the graph. A Hamiltonian graph is one that has a Hamiltonian cycle. The Hamiltonian Cycle Problem is the problem of determining whether a given graph has a Hamiltonian cycle. A bridge is an edge whose removal would disconnect the graph. A bridge graph is a graph containing at least one bridge. A biconnected graph is a nonbridge graph in which there exists an edge whose removal would cause the graph to become bridge. A bi-bridge is a set of two edges in a biconnected graph whose removal disconnect the graph. An induced subgraph of a graph is a graph consisting of a specified subset of the nodes and all the edges whose endpoints are both in the subset.

\section{Bridge Construction}

Take any biconnected cubic graph $G$. Find the bi-bridge that most evenly partitions the nodes of the graph into two sets. If there is more than one such bi-bridge, arbitrarily pick one.

Let $G_1$ be the induced subgraph of $G$ that has more distinct edges in cycles. Two edges are distinct if there does not exist a graph automorphism that maps each edge to the other.

We perform the following construction to obtain a unique bridge graph $G'$ (note that it is not yet cubic):

\begin{center}
\begin{tikzpicture}
[every node/.style={circle,draw=black,fill=white,inner sep=0,minimum size=20}]
\tikzstyle{new}=[fill=blue!20]
\node (b1) at (1,3) {};
\node (b2) at (3,3) {};
\node (b3) at (1,1) {};
\node (b4) at (3,1) {};
\node (new1) at (1,2) [new] {};
\node (new2) at (4,2) [new] {};
\node (new3) at (4,3) [new] {};
\node (new4) at (4,1) [new] {};

\foreach \from/\to in {b1/b2,b3/b4,new1/new2,new3/new2,new4/new2,b1/new1,b3/new1,b2/new3,b4/new4}
\draw (\from) -- (\to);

\draw [dotted] (2,0) -- (2,4);

\draw (1.75,3.25) -- (2.25,2.75);
\draw (1.75,2.75) -- (2.25,3.25);

\draw (1.75,1.25) -- (2.25,0.75);
\draw (1.75,0.75) -- (2.25,1.25);
\end{tikzpicture}
\end{center}

Blue nodes denote new nodes added in the construction. `X's denote edges removed during the construction. The dotted line denotes the partition between the two parts of the graph separated by the bi-bridge.

Let $G_1'$ be the induced subgraph of $G'$ created by splitting the graph by removing the bridge, and picking the side that corresponds to $G_1$.

\section{Mapping}

Pick the node in $G_1'$ whose corresponding node in $G'$ is incident to the bridge. Let us denote it as $v_0$. Every node in $G_1'$ is at some distance from $v_0$. Let $d_{max}$ be the maximum distance possible in $G_1'$.
	
\begin{lemma}
	There are at least $d_{max}$ distinct edges in $G_1'$.
\end{lemma}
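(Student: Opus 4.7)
The plan is to exhibit $d_{\max}$ specific edges in $G_1'$ and argue that no graph automorphism can identify any two of them. The key structural observation I will use is that $v_0$ is distinguished from every other vertex of $G_1'$ purely by local information (its degree), which forces every automorphism to fix it and therefore to preserve distances from it.

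First, I would verify the following degree claim: \emph{in $G_1'$, the vertex $v_0$ has degree $2$, while every other vertex has degree $3$.} This follows directly from the bridge construction. Since $G$ is cubic, every interior vertex of $G_1$ keeps degree $3$ in $G_1'$. Each of the two endpoints of the bi-bridge on the $G_1$ side loses one bi-bridge edge and gains one edge to the new vertex, so it also keeps degree $3$. The newly added vertex $v_0$, by contrast, is attached to the two former bi-bridge endpoints on the $G_1$ side and to exactly one vertex on the other side via the bridge; when we form $G_1'$ by deleting the bridge, $v_0$ drops to degree $2$. In particular $v_0$ is the unique vertex of degree $2$, so any automorphism $\phi$ of $G_1'$ satisfies $\phi(v_0)=v_0$, and consequently $d(\phi(x),v_0)=d(x,v_0)$ for every vertex $x$.

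Next I would choose a shortest path $v_0=u_0,u_1,\ldots,u_{d_{\max}}$ from $v_0$ to some vertex realizing the maximum distance $d_{\max}$, and consider its $d_{\max}$ edges $e_i=\{u_{i-1},u_i\}$. Because this is a geodesic, the endpoints of $e_i$ lie at distances exactly $i-1$ and $i$ from $v_0$. For any automorphism $\phi$, the image $\phi(e_i)$ is an edge whose endpoints still lie at distances $i-1$ and $i$ from $v_0$. Hence if $\phi(e_i)=e_j$, then $\{i-1,i\}=\{j-1,j\}$, forcing $i=j$. So the edges $e_1,\ldots,e_{d_{\max}}$ are pairwise inequivalent under $\mathrm{Aut}(G_1')$, giving at least $d_{\max}$ distinct edges.

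The only step where something could go wrong is the degree observation that pins $v_0$ as the unique degree-$2$ vertex, since everything else is a routine shortest-path argument. If one worried about degenerate cases where $d_{\max}=0$ (so $G_1'$ consists of $v_0$ alone, which cannot happen here since $v_0$ has neighbors by construction) or where an automorphism of $G_1'$ could conceivably act nontrivially on $v_0$, the degree count rules both out. I therefore expect the proof to be short, with the substantive content being the identification of $v_0$ as a canonically-fixed basepoint for a distance-based invariant.
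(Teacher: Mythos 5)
Your proof is correct and follows essentially the same route as the paper: both arguments produce, for each $d \in \{1,\ldots,d_{\max}\}$, an edge joining a vertex at distance $d-1$ to one at distance $d$ from $v_0$ (yours along a single geodesic, the paper's via one ``facilitating'' edge per distance level) and conclude that these $d_{\max}$ edges are pairwise inequivalent under automorphisms. If anything, your version is more complete: the paper merely asserts that edges facilitating different distances are distinct, whereas you justify this by noting that $v_0$ is the unique degree-$2$ vertex of $G_1'$, so every automorphism fixes it and hence preserves distances from it.
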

	
\begin{proof}
	If a node $v_1$ has distance $d$, its neighbors must have distances $d-1$, $d$, or $d+1$. If a neighbor $v_2$ had distance less than $d-1$, this would be a contradiction because $v_1$ could reach $v_0$ faster via $v_2$. If $v_2$ has distance greater than $d+1$, it could reach $v_0$ faster via $v_1$.
	
	Furthermore, any minimal path from $v_1$ to $v_0$ must go through one of its neighbors, so the distance of one of those neighbors must be $d-1$.
	
	If an edge joins a node of distance $d$ and a node of distance $d-1$, we say that edge \emph{facilitates} a distance $d$.
	
	For each $d \in [1,d_{max}]$, there exists at least one node with distance $d$. That node has a neighbor of distance $d-1$, so it is incident to an edge that facilitates a distance $d$. Two edges that facilitate different distances must be distinct. Therefore, there are at least $d_{max}$ distinct edges.
\end{proof}

\begin{lemma}
	Every edge in $G_1'$ is in a cycle.
\end{lemma}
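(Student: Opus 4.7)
The plan is to show that every edge of $G_1'$ lies on a cycle by splitting into two cases according to whether the edge is incident to the newly introduced vertex $v_0$ or inherited from $G$. The essential input is that $G$ is biconnected, so every edge of $G$ lies on some cycle of $G$ and no single edge disconnects $G$.

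First I would verify that $G_1$ is connected. Since $G$ is biconnected, removing either bi-bridge edge alone leaves $G$ connected, and removing the second then creates at most one additional component; hence $G-\{b_1b_2,b_3b_4\}$ has exactly two components, one of which is $G_1$. This yields a path $P$ from $b_1$ to $b_3$ inside $G_1$, and $P$ together with the new edges $b_1 v_0$ and $v_0 b_3$ forms a cycle of $G_1'$ passing through both edges incident to $v_0$, disposing of the easy case.

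For an edge $e$ inherited from $G$ (so both endpoints of $e$ lie in $V(G_1)$), I would invoke biconnectedness of $G$ again to produce a cycle $C$ of $G$ containing $e$. Because $\{b_1b_2, b_3b_4\}$ is an edge cut separating $V(G_1)$ from $V(G_2)$, the cycle $C$ must use an even number of bi-bridge edges, hence $0$ or $2$. In the former case, $C \subseteq G_1 \subseteq G_1'$ and we are done. In the latter case, removing the two bi-bridge edges from $C$ (viewed as a topological circle) breaks $C$ into exactly two arcs, one entirely in $G_1$ and the other entirely in $G_2$; the $G_1$-arc is a single path from $b_1$ to $b_3$ (up to relabeling) containing $e$, so appending $b_1 v_0 b_3$ closes it into a cycle of $G_1'$ through $e$.

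The main obstacle I anticipate is making the structural claim in the second case airtight: namely, that the portion of $C$ lying on the $G_1$-side of the bi-bridge is a single $b_1$-to-$b_3$ path and not some more complicated configuration. I expect this to reduce to routine bookkeeping by parametrizing $C$ as a closed edge-walk and tracking when it crosses the cut, using that a cycle meeting an edge cut in exactly two edges traverses each side in one contiguous arc. Once this is in place, combining the two cases gives the result.
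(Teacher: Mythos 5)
Your proof is correct, but it takes a genuinely different route from the paper's. The paper argues by contradiction at the level of bridges: it supposes $G_1'$ has a bridge, splits into cases according to whether $b_1$ and $b_3$ (the bi-bridge endpoints on the $G_1$ side) land in the same or in different components after the removal, derives a contradiction with the bridgelessness of $G$ (respectively, with the connectedness of $G'$), and only then invokes the equivalence ``nonbridge iff every edge lies on a cycle.'' You instead exhibit a cycle through each edge directly: the two edges at $v_0$ are handled by the connectedness of $G_1$, and an inherited edge $e$ is handled by taking a cycle of $G$ through $e$ (which exists because $G$ is nonbridge) and using the parity of its intersection with the bi-bridge cut --- zero crossings gives a cycle already inside $G_1$, two crossings gives a $b_1$-to-$b_3$ arc that closes up through $v_0$. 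The two arguments are dual via ``bridge iff on no cycle,'' but yours is the more airtight of the two here: the paper's second case (a hypothetical bridge separating $b_1$ from $b_3$) is stated quite loosely (``one of the three new edges \dots is a bridge''), whereas your cut-parity analysis handles exactly that configuration cleanly, and your treatment of the edges incident to $v_0$ is explicit where the paper's is not. One small point you leave implicit: you need $b_1 \neq b_3$ for the $b_1$-to-$b_3$ path closed through $v_0$ to be a genuine cycle. This does hold --- if the two bi-bridge edges shared their endpoint on the $G_1$ side, that vertex's third edge would be a bridge of $G$ (and the construction would create a multi-edge at $v_0$, which the paper excludes) --- but it deserves a sentence.
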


\begin{proof}
	If there exists a bridge in $G_1'$, splitting the graph would either leave the nodes that were incident to the bi-bridge on the same side or on different sides.
	
	If they were on the same side, the same edge would be a bridge in $G$, which is a contradiction since $G$ is biconnected.
	
	If they were on different sides, one of the three new edges added in the construction (excluding the edge that is already known to be a bridge) is a bridge. If we remove any of these, $G'$ is an induced subgraph of the resulting graph. This is a contradiction, since $G'$ is a connected graph. Thus, $G_1'$ is nonbridge, which implies that every edge is in a cycle.
\end{proof}

\begin{theorem}	
	For $n$ sufficiently large, we can generate arbitrarily many unique cubic bridge graphs of size $n+4$ from biconnected cubic graphs of size $n$.
\end{theorem}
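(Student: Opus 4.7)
The plan is to show that the construction $\phi\colon G \mapsto G'$, completed to a cubic graph by adjoining an edge between the two degree-$2$ vertices of $G'$, has fibers of bounded size on isomorphism classes; combined with the unbounded growth of the number of biconnected cubic graphs of size $n$, this yields the theorem. Let $\mathcal{B}_n$ denote the set of isomorphism classes of biconnected cubic graphs of size $n$; the goal is to show $|\phi(\mathcal{B}_n)| \to \infty$ as $n \to \infty$.

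The core step is reconstruction. Label the four new vertices $u_1, u_2, u_3, u_4$ as in the diagram of Section~3, so that $u_1$ is adjacent to $b_1, b_3$, the bridge is $u_1 u_2$, the vertex $u_2$ is adjacent to $u_3, u_4$, and $u_3, u_4$ are adjacent to $b_2, b_4$ respectively. By the preceding bridgelessness lemma, together with an entirely parallel argument for the other side of the bridge, $G'$ has exactly one bridge, namely $u_1 u_2$, which is therefore intrinsically identifiable. Since $|G_1'| = |G_1| + 1$ and $|G_2'| = |G_2| + 3$ generically differ, the two sides of $G'$ are distinguishable by order, letting us label $u_1$ and $u_2$ unambiguously. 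The nodes $u_3, u_4$ are then the non-bridge neighbors of $u_2$; $b_2, b_4$ are their remaining neighbors; and $b_1, b_3$ are the non-bridge neighbors of $u_1$. Deleting $u_1, u_2, u_3, u_4$ and restoring the bi-bridge recovers $G$ up to the two-fold ambiguity of pairing $\{b_1, b_3\}$ with $\{b_2, b_4\}$; hence $|\phi^{-1}(G')| \leq 2$.

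For the counting input I would cite the classical asymptotic enumeration of cubic graphs (e.g.\ work of Bollob\'as and of Robinson--Wormald): the number of biconnected cubic graphs of size $n$ grows super-exponentially in $n$, and in fact almost every cubic graph is $3$-connected. Given any target $M$, for $n$ sufficiently large $|\mathcal{B}_n| \geq 2M$, so $|\phi(\mathcal{B}_n)| \geq M$, yielding arbitrarily many unique cubic bridge graphs of size $n+4$.

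The main obstacle is making the labeling of $u_1$ versus $u_2$ unambiguous in degenerate cases: when $|G_1| = |G_2| + 2$ (so the two sides have equal order), or when $b_1$ and $b_3$ are adjacent in $G$, producing a triangle at the $u_1$ end of the bridge that mirrors the triangle $\{u_2, u_3, u_4\}$ at the $u_2$ end. In these configurations I would appeal to the first lemma of the section, which guarantees at least $d_{\max}$ distinct edge orbits in $G_1'$ and thus rules out total structural symmetry between the two sides except in a controllable collection of instances; these exceptions can be absorbed into a slightly larger uniform fiber-size constant without affecting the asymptotic conclusion.
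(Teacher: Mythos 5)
Your argument establishes a different and strictly weaker statement than the one the paper needs. You complete $G'$ to a cubic graph in essentially one way (by joining its two degree-$2$ vertices) and then argue that the map $G\mapsto G'$ has fibers of size at most $2$, so the image is large because the domain is large. That shows the \emph{total} number of bridge graphs of size $n+4$ obtainable from \emph{all} biconnected cubic graphs of size $n$ grows, but it produces only a bounded number (essentially one) of outputs per input. The theorem, as it is used in the Reduction and Analysis sections, must be read per source graph: from \emph{each} biconnected cubic graph of size $n$ one must generate an unbounded number of distinct cubic bridge graphs, because the whole point is to obtain a multiplier (of order $\sqrt[5]{k}$ after the size reduction) that makes bridge graphs outnumber the non-3-connected non-bridge ones. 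An at-most-$2$-to-$1$ correspondence gives a multiplier of at most $1$, which contributes nothing to that ratio.

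The missing idea is the cycle insertion: the two degree-$2$ vertices of $G'$ are deliberately left unsaturated, and for each of the (at least $d_{\max}$, by Lemmas 4.1 and 4.2) distinct edges lying in cycles of $G_1'$, one removes that edge and joins its two endpoints to the two degree-$2$ vertices. Each such choice yields a connected cubic bridge graph of size $n+4$, distinct choices yield non-isomorphic graphs, and $d_{\max}$ grows with $n$, so a single source graph already yields arbitrarily many outputs; uniqueness across different source graphs is then a separate (and comparatively minor) point. By sealing the degree-$2$ vertices with a single edge you foreclose exactly this step. Separately, your appeal to asymptotic enumeration needs care: since almost all cubic graphs are $3$-connected, the growth of the set of ``biconnected'' graphs in the paper's sense does not follow immediately from the Bollob\'as/Robinson--Wormald counts of all cubic graphs, and in any case the paper's proof does not rely on any external enumeration input.
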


\begin{proof}
To maximize the number of nodes in a graph with a certain maximal distance, the nodes must be arranged in a tree. As the number of nodes increase, the height of the tree increases correspondingly. By Lemmas 4.1 and 4.2, there are at least $d_{max}$ distinct edges in cycles in $G_1'$. Thus, the number of distinct edges in cycles in post-construction graphs increases without bound.

Our post-construction graphs still have two nodes with degree 2. To remedy this, for each distinct edge in a cycle, we will remove it and join its endpoints to the aforementioned two nodes. The choice of which node is joined to which endpoint is arbitrary. The resulting graph is cubic and bridge. It is guaranteed to be connected since the edge we removed was part of a cycle. Furthermore, the part of the graph corresponding to $G_1'$ is nonbridge because any cycle that used the removed edge can now use the new nodes from the construction.

For every biconnected cubic graph, we are able to construct cubic bridge graphs that are all unique from each other and unique from analogous bridge graphs constructed from different biconnected graphs.
\end{proof}

\section{Reduction of Size}

The next step is to find a way to generate bridge graphs of size $n$ from the graphs of size $n+4$.

Let $A$ denote the resulting graph after performing a `cycle insertion' on $G_1'$. Let $k$ be the number of distinct edges in cycles in the graph.

Now create an induced subgraph by taking the $\lfloor \sqrt[5]{k} \rfloor$ nodes of $G_1'$ closest to $v_0$ (including $v_0$). Then remove all nodes of maximal distance, and denote the resulting induced subgraph as $A'$.

We have the following cases for the structure of $A'$:

\begin{description}
\item[Isolated Triangles] If there exists a triangle, such that it does not share an edge with another triangle, then we can `reduce' the triangle. We remove the triangle, and replace it with a new node, joining it to all the external neighbors of the vertices of the triangle. The resulting overall graph has $n+2$ nodes, and is still connected, bridge, and cubic. No node's distance has increased after the construction.

\item[Adjacent Triangles] If there exists at least one triangle, but every triangle share an edge with another triangle, we take a pair of adjacent triangles, and remove them from the graph. If the two edges leaving the component are incident to the same node, the third edge incident to that node would be a bridge. This is a contradiction, since $A$ is nonbridge. Thus, the two edges leaving the component must be incident to different nodes.

After we removed the pair of adjacent triangles, the two external nodes to which it leads have degree 2. To remedy this, we add a new node to the graph and join both nodes to it. Now the nodes are guaranteed to be connected again. We add another node and join the two new nodes together. Finally, we perform a cycle insertion with any arbitrary edge and the second new node. The resulting overall graph has $n+2$ nodes, and is connected, bridge, and cubic. No node's distance has increased after the construction.

\item[No Triangles] If there are no triangles, then we have two subcases. If there exists an edge that joins two nodes of the same distance from $v_0$, then we can do the following:

\begin{center}
\begin{tikzpicture}
[every node/.style={circle,draw=black,fill=white,inner sep=0,minimum size=20}]
\node (far1) at (1,3) {};
\node (mid1) at (1,2) {};
\node (close1) at (1,1) {};
\node (far2) at (3,3) {};
\node (mid2) at (3,2) {};
\node (close2) at (3,1) {};

\foreach \from/\to in {far1/mid1,mid1/close1,far2/mid2,mid2/close2,mid1/mid2}
\draw (\from) -- (\to);

\draw (2.75,2.25) -- (3.25,1.75);
\draw (2.75,1.75) -- (3.25,2.25);

\draw (0.75,2.25) -- (1.25,1.75);
\draw (0.75,1.75) -- (1.25,2.25);

\draw [blue] (0.67,3) arc [radius=1, start angle=90, end angle=270];
\draw [blue] (3.33,1) arc [radius=1, start angle=-90, end angle=90];
\end{tikzpicture}
\end{center}

The resulting graph is connected, bridge, and cubic. Since the removed edge did not facilitate any distance, it was not necessary for any path to $v_0$ for any node. Thus, no remaining node's distance has increased after the construction.

We discuss the second subcase later in the paper.\\
\end{description}

If we apply the two-node removal twice, we will have successfully removed four nodes. However, we have to account for the fact that the resulting graphs are not necessarily unique. First of all, the three cases could generate identical graphs, so we have to include a constant multiplier of 3x. Second, we cannot uniquely identify generating graphs from the post-construction resulting graphs. All we know is that the removed nodes were in the subgraph $A'$. We can use the size of $A'$ to bound the amount of possible duplication. Finally, in the `No Triangles' case, every pair of edges has to be considered when trying to recover the generating graph, as opposed to just creating a triangle out of every node as in the `Isolated Triangles' case.

\section{Analysis}

When we account for the loss of uniqueness that takes place during the removal stage, we end up with a gain on the order of $\sqrt[5]{k}$, where $k$ is the number of distinct edges in cycles in $A$. Equivalently, for each biconnected cubic graph of size $n$, we can generate an amount of cubic bridge graphs of size $n$ on the order of $\sqrt[5]{k}$. As $k$ grows larger, $\sqrt[5]{k}$ similarly grows larger without bound, so we are able to generate arbitrarily many graphs for sufficiently large $n$.

However, we have not yet accounted for the second subcase of the `No Triangles` case in the previous section. If the graph $A'$ has no triangles, then it is a complete tree. If we can sufficiently limit the size of the set of graphs in this subcase, then we can guarantee that we can generate arbitrarily many bridge graphs from biconnected graphs for large $n$.

We now pose a conjecture and state our final result:

\begin{conjecture}
	For large $n$, there exists an injection from the set of cubic bridge graphs of size $n+2$ with a complete tree of size in the range $\left[ \frac{\sqrt[5]{k}}{2}, \sqrt[5]{k} \right)$ rooted at the bridge to the set of cubic bridge graphs of size $n$.
\end{conjecture}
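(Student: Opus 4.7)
The plan is to define a canonical reduction that takes any cubic bridge graph $H$ of size $n+2$ satisfying the hypothesis and produces a cubic bridge graph $H'$ of size $n$, with the reduction being invertible from structural data in $H'$. First I would exploit the bridge and the complete tree $T$ rooted at it to produce a canonical vertex labeling: perform a breadth-first traversal of $T$ starting from the bridge endpoint, breaking ties between siblings by a deterministic rule depending only on the complement graph outside $T$ (for instance, the lexicographically smallest tuple of distances, in $H \setminus T$, from each subtree's external neighbors to the other endpoint of the bridge).

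Next, locate the canonical \emph{leftmost deepest sibling pair} $u_1, u_2$ of $T$, sharing a parent $p$. Since $H$ is cubic and $u_1, u_2$ are leaves of $T$, each has exactly two edges leaving $T$; label their far endpoints $w_1^1, w_1^2, w_2^1, w_2^2$ in the canonical order. The reduction deletes $u_1, u_2$ and adds three edges: from $p$ to $w_1^1$, from $p$ to $w_2^1$, and between $w_1^2$ and $w_2^2$. The resulting graph $H'$ has $n$ vertices and is cubic by construction; the original bridge is untouched, and the three new edges cannot disconnect the graph because $u_1, u_2$ were already in cycles by Lemma 4.2 applied to the tree side.

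To invert the map on its image, observe that $H'$ still contains a complete tree $T'$ of size $s - 2$ rooted at the bridge endpoint, obtained by deleting $u_1, u_2$ from $T$. The parent $p$ is distinguished within $T'$ as the unique would-be internal node whose three neighbors all lie outside $T'$ in the canonical insertion pattern (two shared with the same external vertex set, one matched to a specific edge between them). Re-running the canonical labeling on $T'$ recovers the positions of $w_1^1, w_1^2, w_2^1, w_2^2$, which uniquely determines $u_1, u_2$ and the deleted edges, giving back $H$.

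The main obstacle, and the reason this remains a conjecture rather than a theorem, is twofold. First, one must verify that the reduction never produces multi-edges, self-loops, or a graph violating the bridge condition; this requires ruling out degenerate configurations in which $p$ is already adjacent to one of the $w_i^j$, or in which $w_1^2 = w_2^2$. Second, and more delicate, is ruling out collisions: two distinct source graphs could produce the same $H'$ if the canonical tie-breaking fails to distinguish two structurally similar sibling pairs, or if the three added edges accidentally reconstruct a complete tree of the expected size elsewhere in $H'$. I expect handling collisions to dominate the difficulty, and it likely requires either a probabilistic argument bounding the density of ambiguous graphs within the target set, or a refinement of the reduction that implicitly encodes the discarded vertices' identities into local wiring choices at the boundary of $T'$.
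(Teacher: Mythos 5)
The statement you were asked about is posed in the paper as an open conjecture; the paper offers no proof of it, and its final theorem is explicitly conditional on it. So there is no ``paper's proof'' to compare against, and the relevant question is whether your proposal actually closes the gap. It does not: you yourself flag the two decisive obstacles (degenerate configurations and collisions) as unresolved, and those are not loose ends to be tidied up later --- they are the entire content of the conjecture. A map that deletes a sibling leaf pair $u_1,u_2$ and rewires $p$, $w_1^1$, $w_2^1$, $w_1^2$, $w_2^2$ is easy to write down; what is hard is showing it is well defined (no multi-edge when $w_1^1=w_2^1$, no loop when $w_1^2=w_2^2$, connectivity preserved) and injective on the stated domain. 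Your connectivity argument in particular is not sound as written: you invoke Lemma 4.2, but that lemma concerns the specific subgraph $G_1'$ arising from the paper's bridge construction, whereas the conjecture quantifies over \emph{all} cubic bridge graphs of size $n+2$ containing a suitable complete tree; and even where edges do lie on cycles, deleting two vertices and rewiring five others can still disconnect the graph or destroy the needed structure.

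The inversion step also has a concrete hole. To invert, you must recover $T'$, $p$, and the four attachment points from $H'$ alone, but nothing guarantees that your canonical labeling procedure, run on $H'$, selects the tree $T'$ rather than some other complete tree rooted at the bridge (the image graph may contain several, or none of the expected size, since the size window $\left[ \frac{\sqrt[5]{k}}{2}, \sqrt[5]{k} \right)$ is measured against a parameter $k$ of the \emph{source} graph that the reduction itself perturbs). Likewise, ``the unique would-be internal node whose three neighbors all lie outside $T'$ in the canonical insertion pattern'' is not obviously unique: other vertices of $H'$ can exhibit the same local pattern by coincidence, which is exactly the collision problem you defer. Absent a proof that the canonical data survives the reduction and is recoverable, what you have is a candidate construction plus a list of the lemmas that would need to be proved --- i.e., a restatement of the conjecture in more concrete form, not a proof of it.
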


\begin{theorem}
	If Conjecture 6.1 holds, then almost all non-3-connected cubic non-Hamiltonian graphs are bridge graphs.
\end{theorem}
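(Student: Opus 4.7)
The plan is to reduce the theorem to an asymptotic counting inequality and then to leverage the constructions of Sections~3--5 together with Conjecture~6.1. Every non-3-connected cubic graph either contains a bridge or is biconnected, and every cubic bridge graph is non-Hamiltonian, so the non-3-connected cubic non-Hamiltonian graphs of order $n$ partition into the cubic bridge graphs of order $n$ and the biconnected cubic non-Hamiltonian graphs of order $n$. Writing $B(n)$ and $C(n)$ for the respective counts, the statement ``almost all'' is equivalent to showing $C(n)/B(n)\to 0$ as $n\to\infty$.

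To produce a large lower bound on $B(n)$, I would compose the Bridge Construction of Section~3 with two applications of the size-reduction procedure of Section~5. By Lemmas~4.1 and~4.2 together with Theorem~4.3, each biconnected cubic graph of order $n$ produces an intermediate cubic bridge graph of order $n+4$ whose number $k$ of distinct edges in cycles grows without bound as $n\to\infty$. Two rounds of the two-node removal then yield, per biconnected source graph, on the order of $\sqrt[5]{k}$ distinct cubic bridge graphs of order $n$, after absorbing the factor of $3$ for the three reduction cases and the polynomial overcount coming from the $\lfloor\sqrt[5]{k}\rfloor$ candidate nodes in $A'$. Since the construction uses \emph{all} biconnected cubic graphs, not only the non-Hamiltonian ones, we obtain $B(n)\ge c\cdot C(n)\cdot \sqrt[5]{k}$ for some constant $c>0$, and this forces $C(n)/B(n)\to 0$ provided the exceptional case below is handled.

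The main obstacle is the second subcase of the `No Triangles' case, in which $A'$ has no equidistant edge and is therefore a complete tree rooted at the bridge; here the two-node removal is unavailable, so the reduction chain cannot produce a cubic bridge graph of order $n$ directly. This is exactly the situation addressed by Conjecture~6.1, which supplies an injection from cubic bridge graphs of order $n+2$ whose rooted tree at the bridge has size in $\left[\sqrt[5]{k}/2,\sqrt[5]{k}\right)$ into cubic bridge graphs of order $n$. Assuming the conjecture, the exceptional graphs appearing after the first reduction round are sent injectively into the same target class of cubic bridge graphs of order $n$, so they do not inflate the overcount on the bridge side, and the lower bound $B(n)\ge c\cdot C(n)\cdot\sqrt[5]{k}$ persists.

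Since $\sqrt[5]{k}\to\infty$ with $n$, the inequality above yields $C(n)/B(n)\to 0$, which is exactly the conclusion of the theorem. I expect the most delicate step to be the overcount bookkeeping in the second paragraph: one must verify carefully that the preimage of any produced bridge graph under the composed construction has size at most polynomial in $\lfloor\sqrt[5]{k}\rfloor$, with no hidden collisions between the three reduction cases or between the two reduction rounds, so that the $\sqrt[5]{k}$ gain genuinely dominates the overcount. Once this bookkeeping and the injection from Conjecture~6.1 are in place, the asymptotic conclusion follows immediately.
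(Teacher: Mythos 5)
Your proposal follows the same route the paper intends: the paper in fact prints no proof of this theorem at all (the statement is followed directly by the bibliography), and the argument you outline --- partition the non-3-connected cubic non-Hamiltonian graphs into bridge graphs and biconnected ones, then show $C(n)/B(n)\to 0$ by manufacturing roughly $\sqrt[5]{k}$ cubic bridge graphs of order $n$ per biconnected cubic graph of order $n$ --- is exactly the sketch of Sections 3--6. So at the level of strategy there is nothing to object to, and your reduction of ``almost all'' to the ratio $C(n)/B(n)$ is the right formalization.

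The problem is that the step you flag as ``delicate'' and defer is the entire mathematical content of the theorem, and as written it does not yet go through. First, the net gain of $\sqrt[5]{k}$ is not produced by the two removal rounds, which only lose information; it comes from the $\sim k$ cycle-insertion choices divided by the preimage bound of roughly $\left(\sqrt[5]{k}\right)^{2}$ per removal round (the paper notes that in the No Triangles case every pair of edges of $A'$ must be considered when recovering the generating graph), i.e.\ $k/k^{4/5}$. Your inequality $B(n)\ge c\,C(n)\,\sqrt[5]{k}$ therefore additionally requires (i) a \emph{uniform} lower bound on $k$ over all biconnected sources of order $n$ (via $k\ge d_{max}$ and a logarithmic lower bound on $d_{max}$ from the bounded degree), and (ii) injectivity of the composed map \emph{across} different source graphs after the reductions --- Theorem 4.3 only asserts this for the size-$(n+4)$ graphs, and the reductions can in principle merge outputs coming from different sources. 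Neither is established. Second, your use of Conjecture 6.1 is misdirected: an injection into the set of cubic bridge graphs of order $n$ yields exactly one output per exceptional graph, so the exceptional class contributes no $\sqrt[5]{k}$ gain; the purpose of the injection is to guarantee those graphs are not simply lost from the count, not that ``they do not inflate the overcount.'' Moreover the conjecture as stated covers only the step from size $n+2$ to size $n$, so if the \emph{first} reduction round (from $n+4$ to $n+2$) already lands in the complete-tree subcase, your chain produces nothing for that source; you need the conjecture at every size and should say so explicitly. Until these points are closed, the asymptotic conclusion does not follow from what you have written.
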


\bibliographystyle{amsplain}

\end{document}